\documentclass[12pt]{article}

\usepackage[T1]{fontenc}
\linespread{1.05}        
\usepackage[hmargin=1.0in]{geometry}

\usepackage{graphicx}
\usepackage{hyperref}
\usepackage{amsmath,amssymb,amsthm}
\usepackage{booktabs}
\usepackage{todonotes}
\usepackage[caption=false]{subfig}

\newtheorem{remark}{Remark}
\theoremstyle{plain}
\newtheorem{theorem}{Theorem}
\newtheorem{lemma}{Lemma}

\bibliographystyle{plain}

\newcommand{\sspcoeff}{{\mathcal C}}
\newcommand{\Real}{{\mathbb R}}
\newcommand{\ee}{\mathbf{e}}

\newcommand{\bb}{\overline{\textbf{\textit{b}}}}
\newcommand{\betab}{\overline{\beta}}

\begin{document}

\title{Dense output for strong stability preserving Runge--Kutta methods}

\author{
David I. Ketcheson\thanks{Corresponding author. King Abdullah University of Science and Technology (KAUST), Thuwal 23955-6900, Saudi Arabia (david.ketcheson@kaust.edu.sa).} \and 
Lajos L{\'o}czi\thanks{King Abdullah University of Science and Technology (KAUST), Thuwal 23955-6900, Saudi Arabia (lajos.loczi@kaust.edu.sa).} \and
Aliya Jangabylova\thanks{Nazarbayev University, Kazakhstan.} \and 
    Adil Kusmanov\thanks{Nazarbayev University, Kazakhstan.}
\thanks{This work was supported by the King Abdullah University of Science and Technology (KAUST), 4700 Thuwal, 23955-6900, Saudi Arabia.  The last two authors were supported by the KAUST Visiting Student Research Program.}}

\maketitle

\begin{abstract}
We investigate dense output formulae (also known as continuous extensions)
for strong stability preserving (SSP) Runge--Kutta  methods.  We require 
that the dense output formula also possess the SSP property, ideally under
the same step-size restriction as the method itself.  A general recipe
for first-order SSP dense output formulae for SSP methods is given, and
second-order dense output formulae for several optimal SSP methods are developed.
It is shown that  SSP dense output formulae of order 3 and higher 
do not exist, and that in any 
method possessing a second-order SSP dense output, the coefficient matrix
$A$ has a zero row.
\end{abstract}

\section{Motivation and goals}
Strong stability preserving (SSP) Runge--Kutta (RK) methods are 
widely used in the time discretization of hyperbolic conservation
laws.  SSP methods guarantee the preservation of solution properties
based on convex functionals, such as monotonicity
or contractivity (in arbitrary norms); positivity; and maximum
principles. For a review of SSP methods see \cite{2009_ssp_review,SSPbook}.

An SSP integrator is chosen usually in order to maximize the allowed
step size while guaranteeing some desired property.
In practice it frequently happens that the next time step would go beyond a desired output time.  In this case, it is common to shorten the step in order to reach the output time exactly.  If very frequent output is required, 
as is often the case in aeroacoustics applications, for instance,
then the numerical steps may be much smaller than what the 
method would otherwise allow.  In particular, 
for SSP methods frequent output may mean that the steps used in practice are
much smaller than the SSP step size, which is inefficient.
Even if the spacing between output times is much larger than the
SSP step size, it may be necessary to use a small step 
immediately before each output time.  In the context of hyperbolic
problems, this corresponds to a step with small CFL number,
which introduces a larger amount of dissipation.

In order to avoid the need to stop exactly at the output times,
a RK method can be designed with a {\em dense output formula},
also known as a {\em continuous extension} 
\cite{zennaro1986natural,Enright:1986:IRF:7921.7923}.  Whereas the RK 
method provides output only at the discrete step times 
$t_1, t_2, \dots$,
the dense output formula can be used to obtain output at
arbitrary times, often without the need for any additional 
function evaluations.
The construction of dense output formulae for general
RK methods is well understood.  For instance,
dense output of at least third order accuracy can be 
obtained for any RK method using Hermite interpolation,
and higher-order formulae can also be derived \cite[Section 2.6]{hairer1993}.  

Contractivity of continuous RK methods has been studied previously
in \cite{torelli1991sufficient,bellen1997contractivity} and related
works.  Therein, the interest was primarily in unconditional
contractivity for applications to delay differential equations, and
a diagonally split Runge--Kutta method was shown to achieve
unconditional contractivity (see also \cite{macdonald2008numerical}).

In the present work, we investigate dense output formulae for explicit SSP
RK methods.  Naturally, we require that the dense output formula
also possess the SSP property, under the same (finite) step-size restriction
as the overall method.  This turns out to be a very strong
requirement:
we prove that for any SSP RK method, there exists no SSP 
dense output formula of order three or higher.
On the other hand, we show that for many SSP methods, SSP dense output of order
one or two can be constructed in a simple way.

\subsection{A numerical example}
Let us motivate this work by showing what can go wrong if no attention
is paid to the SSP property of a dense output formula.

As a test problem, we choose the scalar ODE
\begin{subequations} \label{sinode}
\begin{align} 
    u'(t) & = f(t,u(t)):=\sin(10t)\,u(t)\, (1-u(t))\\
    u(0) &=u_0
\end{align}
\end{subequations}
with various initial conditions $u_0\in I_1:=[0,1]$. 
It is easily seen that the exact solution of \eqref{sinode}
\[
u(t)={u_0}\left(u_0  +\left(1-u_0\right) \exp\left(\frac{\cos (10
   t)-1}{10}\right)\right)^{-1}
\]
remains in $I_1$ for any $u_0\in I_1$ and $t\ge 0$.
It is also straightforward to show that applying the forward Euler method 
\[
    u_{n+1} = u_n+h f(n h,u_n)
\]
to \eqref{sinode} with an arbitrary starting value $u_0\in I_1$ and fixed step size $h\in 
[0,h_{\textrm{FE}}]$ with $h_{\textrm{FE}}:=1$ yields a solution $u_{n+1} \in I_1$.

Now let us apply the following 3-stage, 2nd-order SSP Runge--Kutta method to
the same problem:
\begin{subequations}\label{numexamplemethod}
\begin{align}
    y_1 & = u_n \\
    y_2 & = y_1 + \frac{h}{2} f(t_n, y_1) \\
    y_3 & = y_2 + \frac{h}{2} f(t_n + h/2, y_2) \\
    u_{n+1} & = \frac{1}{3} u_n + \frac{2}{3} \left(y_3 + \frac{h}{2} f(t_n + h, y_3)\right).
\end{align}
\end{subequations}
Since each stage of this method is a convex combination of Euler steps with
step size at most $h/2$, it also preserves the invariance of $I_1$, under the
larger step-size restriction $h\in[0,2h_{\textrm{FE}}]=[0,2]$.

In order to evaluate the solution at off-step points, we will consider two dense output formulae.
Here $u_{n+\theta}$ is an approximation to $u(t_n + \theta h)$, and we define the
solution in a piecewise fashion so that $\theta \in [0,1]$.
The first candidate formula is
\begin{align} \label{SSP}
    u_{n+\theta} & = \left(1-2\theta + \frac{4}{3}\theta^2\right)u_n + 
        2(\theta-\theta^2)\left(y_1+\frac{h}{2}f(t_n,y_1)\right) +
        \frac{2}{3}\theta^2\left(y_3 + \frac{h}{2}f(t_n+h,y_3)\right).
\end{align}
The second candidate formula is
\begin{align} \label{nonSSP}
    u_{n+\theta} = u_n + h\left( (2\theta-\theta^2)f(t_n,y_1) + (-2\theta + \theta^2)f(t_n+h/2,y_2) + \theta f(t_n+h,y_3)\right).
\end{align}
Both \eqref{SSP} and \eqref{nonSSP} are 2nd-order accurate.
Regarding formula \eqref{SSP},
since the coefficient functions $1-2\theta + 4\theta^2/3$, 
$2(\theta-\theta^2)$ and $2\theta^2/3$ are non-negative for $\theta\in[0,1]$, this
formula is also a convex combination of Euler steps, with step size
$h/2$.  So it also preserves the invariance of $I_1$ for $h\in[0,2h_{\textrm{FE}}]$.
This is however not true of formula \eqref{nonSSP}.


We integrate \eqref{sinode} with method \eqref{numexamplemethod} for a range of 
initial conditions $u_0 \in I_1$ with step size $h=1.6$
to obtain the values given by the set of black dots in Figure \ref{fig:sinode} 
(the two sets of dots are the same in both subfigures). 
The application of the two dense output 
formulae \eqref{SSP} and \eqref{nonSSP} to \eqref{sinode} with the same
step size $h=1.6$ now
provides us with two sets of piecewise quadratic interpolants.
As shown in Figure
\ref{fig:sinode}, the SSP dense output \eqref{SSP} preserves the relation $u_{n+\theta}\in I_1$ 
while the non-SSP dense output \eqref{nonSSP} does not. 
Tests with larger step sizes confirm that dense output using the SSP 
formula \eqref{SSP}
remains in $I_1$ for $h\in [0,2h_{\textrm{FE}}]$.

\begin{figure}
    \subfloat[SSP]{\includegraphics[width=3in]{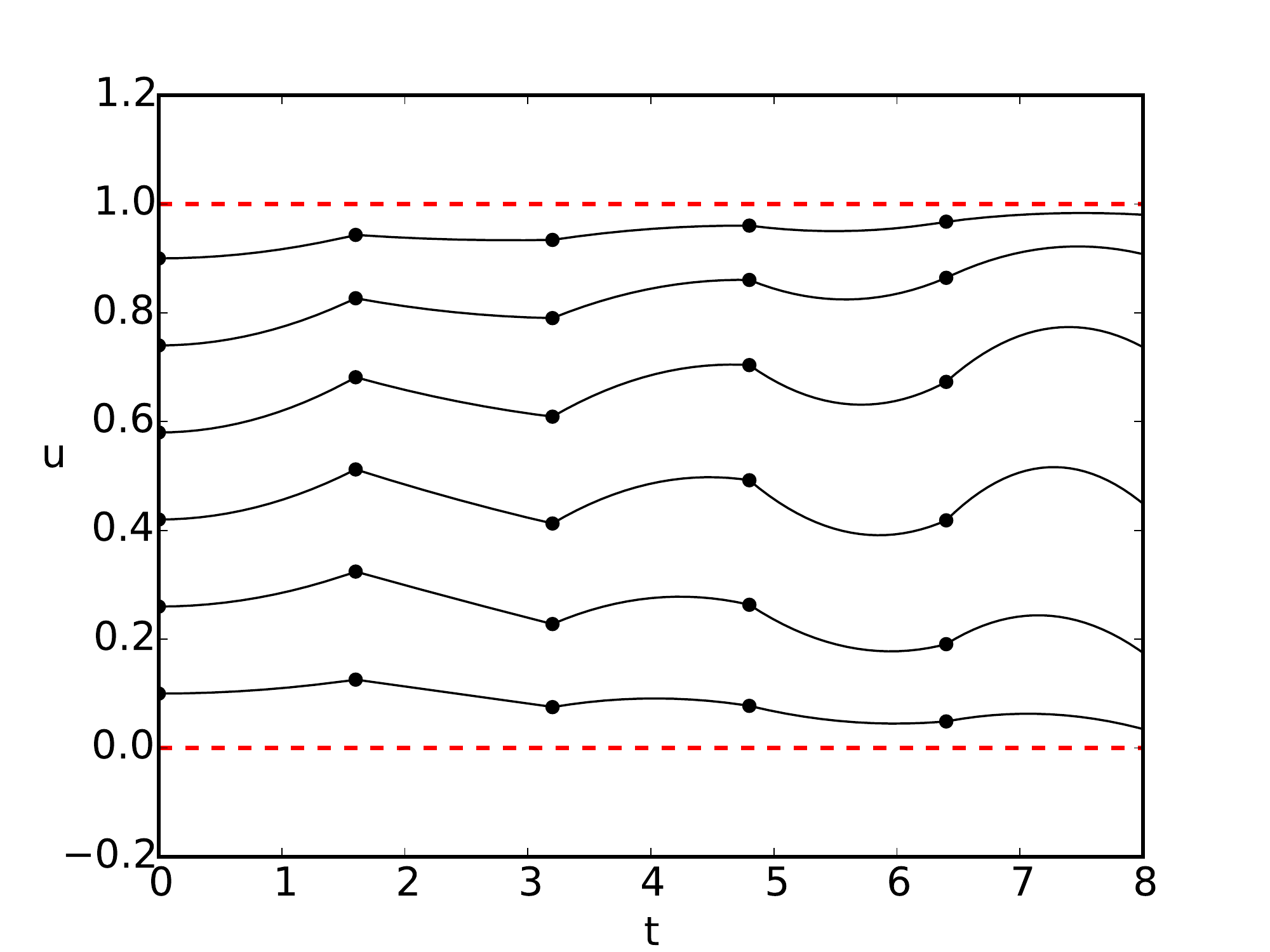}}
    \subfloat[non-SSP]{\includegraphics[width=3in]{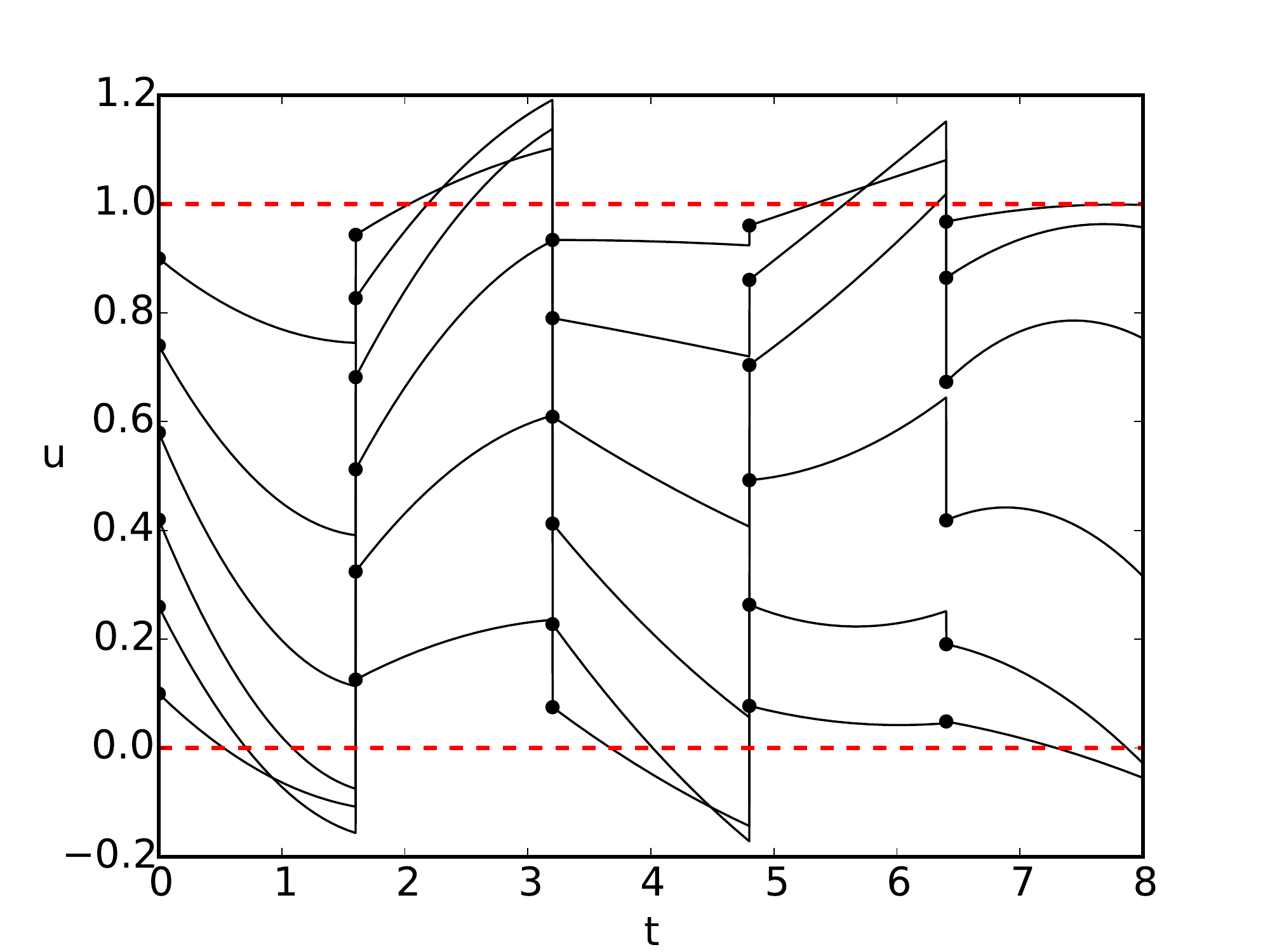}}
    \caption{Numerical solutions of \eqref{sinode} computed using 
    SSP dense output \eqref{SSP} (left figure) and non-SSP dense
        output \eqref{nonSSP} (right figure).\label{fig:sinode}}
 \end{figure}

\subsection{Outline}
In the remainder of the paper, we show how to obtain SSP dense
output formulae like \eqref{SSP} and investigate conditions under
which such formulae exist.  In Section
\ref{basic}, we recall algebraic properties of SSP methods and
order conditions for Runge--Kutta methods.  We also formulate
the algebraic conditions for SSP dense output formulae.  In
Section \ref{sec:restrict}, we show by considering the simpler
case of quadrature that third-order SSP dense output does not exist.
In Section \ref{formulae}, we give a first-order SSP dense output
for any SSP method and a second-order SSP dense output for
certain methods.
In the Appendix we list some practically useful and simple SSP dense output formulae.

\section{Background on strong stability preservation and dense output\label{basic}}
In this section we review some of the essential algebraic
properties of SSP RK methods and dense output that will be used later.
For a more general review of SSP theory, see \cite{SSPbook}.
For a more general review of dense output, see \cite{bellen1996some}.

\subsection{Runge--Kutta methods}
An $s$-stage RK method generates an approximate solution of the initial
value problem
\begin{align*}
	u'(t) & = f(u(t))
\end{align*}
via the iteration 
\begin{subequations}
\begin{align}
	y_i & = u_n + h  \sum_{j=1}^s a_{i,j} f(y_j) \\
    u_{n+1} & = u_n +  h \sum_{j=1}^s b_j f(y_j).\label{RKdef2eq}
\end{align}
\end{subequations}
For simplicity of notation we have assumed the problem is 
written in autonomous form. The method is defined by 
the coefficient arrays $A \in \Real^{s \times s},
b \in \Real^s$.  We will also refer to the vector $c$ of
abscissas, defined by $c_i := \sum_{j=1}^s a_{i,j}$.

The order conditions for a Runge--Kutta method up to order three are
\begin{subequations} \label{ocRK}
\begin{align}
    \sum_{j=1}^s b_j & =  1,  & (p=1)\  \label{ocRK1} \\
     \sum_{j=1}^s b_j c_j & =  \frac{1}{2}, &  (p=2)\  \label{ocRK2}  \\
    \sum_{j=1}^s  b_j c_j^2 & =  \frac{1}{3},  &  (p=3)\  \label{ocRK3} \\
    \sum_{j=1}^s b_j \left(\frac{c_j^2}{2} - \sum_{k=1}^s a_{j,k} c_k\right) & =  0,  &  (p=3). \label{(4d)}
\end{align}
\end{subequations}

Any method with more than one row of $A$ equal to zero is (trivially) reducible \cite{jeltsch2006reducibility}.
Furthermore, a Runge--Kutta method is invariant under permutation of its stages.
Therefore, without loss of generality we assume 
throughout this work that $A$ has at most one row identically
equal to zero, and if it has such a row then it is the first row:
\begin{align}\label{assumption-z}
    \text{For each } 2\le j\le s, \text{ row $j$ of $A$ is not identically zero.}
\end{align}

\subsection{Strong stability preserving Runge--Kutta methods}
Let $\ee$ denote the vector of length $s$ with all entries equal
to 1.
A method is said to be strong stability preserving if there
exists $r>0$ such that
\begin{subequations} \label{absmonRK}
\begin{align}
	A(I+rA)^{-1} & \ge 0 & rA(I+rA)^{-1}\ee \le 1 \label{Aineq} \\
    b^\top(I+rA)^{-1} & \ge 0 & rb^\top(I+rA)^{-1}\ee \le 1. \label{bineq}
\end{align}
\end{subequations}
The inequalities are meant componentwise.

The {\em SSP coefficient} of the method, denoted by $\sspcoeff(A,b)$,
can be defined as follows \cite{SSPbook}:
\begin{align*}
	\sspcoeff(A,b) := \sup 
    	\left\{r \ge 0 : (I + rA)^{-1} \text{ exists and \eqref{absmonRK} holds} \right\}
\end{align*}
if the set in the $\sup$ above is not empty, and $\sspcoeff(A,b) :=0$ otherwise.
The following lemma is well known; for a proof see \cite[p. 65]{SSPbook}.
\begin{lemma}\label{lemma1}
Let a RK method $(A,b)$ be given.  If the method has positive
SSP coefficient $\sspcoeff(A,b)>0$, then $a_{i,j}\ge 0$ and $b_j\ge 0$
for all $1\le i,j\le s$.
\end{lemma}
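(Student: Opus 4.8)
The plan is to extract a single positive value of $r$ at which the absolute monotonicity conditions \eqref{absmonRK} hold, and then peel off the nonnegativity of $A$ and of $b$ from them. Since $\sspcoeff(A,b)>0$, the defining supremum is positive, so there exists an admissible $r_0>0$: that is, $(I+r_0A)^{-1}$ exists and \eqref{Aineq}--\eqref{bineq} hold with $r=r_0$. I would set $X:=A(I+r_0A)^{-1}$ and $w^\top := b^\top(I+r_0A)^{-1}$. By \eqref{Aineq}--\eqref{bineq} we have $X\ge 0$, $r_0 X\ee\le\ee$, and $w\ge 0$. I first aim to show $A\ge 0$, after which $b\ge 0$ follows quickly.

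The central algebraic identity I would use is $(I+r_0A)^{-1}=I-r_0X$, which follows immediately from expanding $I=(I+r_0A)(I+r_0A)^{-1}$; rearranging $X=A(I+r_0A)^{-1}$ then gives $(I-r_0X)A=X$. Thus $A=(I-r_0X)^{-1}X$, and the task reduces to showing that $(I-r_0X)^{-1}$ is a nonnegative matrix. Here $r_0X$ is nonnegative with row sums at most one (by $r_0X\ee\le\ee$), so its spectral radius is at most one; moreover $I-r_0X=(I+r_0A)^{-1}$ is invertible, so $1$ is not an eigenvalue of $r_0X$. Combining these with the Perron--Frobenius theorem (the spectral radius of a nonnegative matrix is itself an eigenvalue) forces $\rho(r_0X)<1$. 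The Neumann series $(I-r_0X)^{-1}=\sum_{k\ge 0}(r_0X)^k$ then converges with all terms nonnegative, so $A=(I-r_0X)^{-1}X\ge 0$, i.e.\ $a_{i,j}\ge 0$ for all $i,j$.

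Finally, from $w^\top=b^\top(I+r_0A)^{-1}$ I obtain $b^\top=w^\top(I+r_0A)=w^\top+r_0\,w^\top A$; since $w\ge 0$ and $A\ge 0$ have just been established, both summands are nonnegative, and hence $b_j\ge 0$ for all $j$. The only delicate point in this argument is the spectral-radius step: passing from the weak bound $\rho(r_0X)\le 1$ to the strict inequality $\rho(r_0X)<1$ required for convergence of the Neumann series. I expect this to be the main obstacle, and it is precisely where the invertibility of $I+r_0A$ and the Perron--Frobenius theorem must be invoked together.
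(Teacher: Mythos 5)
Your proof is correct. Note that the paper itself does not prove this lemma; it simply cites \cite[p.~65]{SSPbook}, so there is no in-paper argument to match against. Your argument is essentially the standard one behind that reference: pick an admissible $r_0>0$, set $X:=A(I+r_0A)^{-1}$, observe $(I+r_0A)^{-1}=I-r_0X$, and reduce everything to showing $(I-r_0X)^{-1}\ge 0$, after which $A=(I-r_0X)^{-1}X\ge 0$ and $b^\top=w^\top(I+r_0A)\ge 0$ follow. (One small preliminary you handled implicitly and correctly: a positive supremum of a set of nonnegative reals does guarantee an admissible $r_0>0$ in the set, not merely values approaching it.) The only point where you diverge from the usual textbook route is the passage from $\rho(r_0X)\le\lVert r_0X\rVert_\infty\le 1$ to the strict inequality $\rho(r_0X)<1$: you invoke Perron--Frobenius (the spectral radius of a nonnegative matrix is an eigenvalue) together with the invertibility of $I-r_0X=(I+r_0A)^{-1}$, which is valid. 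The more elementary alternative, used in the standard proofs, is to note that $\rho(\xi r_0X)\le\xi<1$ for every $\xi\in(0,1)$, so $(I-\xi r_0X)^{-1}=\sum_{k\ge 0}\xi^k(r_0X)^k\ge 0$, and then let $\xi\to 1^-$ (or, equivalently, establish absolute monotonicity for all $r\in[0,r_0]$ and let $r\to 0^+$), which avoids Perron--Frobenius entirely at the cost of a limiting argument. Both routes are sound; yours is slightly heavier machinery but gives the conclusion directly at $r_0$.
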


\subsection{Dense output for Runge--Kutta methods}
A dense output formula takes the form
\begin{align} \label{dense-output}
	u_{n+\theta} & = u_n + h \sum_{j=1}^s \bb_j(\theta)f(y_j);
\end{align}
the weights $b_j$ in \eqref{RKdef2eq} have been replaced by some real-valued
functions $\bb_j(\cdot)$.
Here $u_{n+\theta}$ is an approximation to the solution at time
$t_n + \theta(t_{n+1}-t_n)=t_n+\theta h$, and naturally $\theta\in [0,1]$. 
Hence, throughout the work, we assume that the weights $\bb_j$ are defined on the interval $[0,1]$;
in Section \ref{sec:restrict} we will impose some (minimal) smoothness assumptions on $\bb_j$, 
and in Section \ref{formulae} we will assume that the weights $\bb_j$ are polynomials (to develop 
the simplest possible dense output formulae).
In some cases it is advantageous to include
an additional term proportional to $f(u_{n+1})$ in \eqref{dense-output} (but this modification of
 \eqref{dense-output} will not be considered 
in the present work). 

The order conditions for the dense output formula (up to order three) are
\cite[Section~2.6]{hairer1993} 
\begin{subequations} \label{oc}
\begin{align}
    \sum_{j=1}^s \bb_j(\theta) & =  \theta, \quad\forall \theta\in[0,1] & (p=1)\  \label{oc1} \\
     \sum_{j=1}^s \bb_j(\theta) c_j & =  \frac{\theta^2}{2}, \quad\forall \theta\in[0,1] &  (p=2)\  \label{oc2}  \\
    \sum_{j=1}^s  \bb_j(\theta) c_j^2 & =  \frac{\theta^3}{3}, \quad\forall \theta\in[0,1] &  (p=3)\  \label{3rdorderA} \\
    \sum_{j=1}^s \sum_{k=1}^s \bb_j(\theta) a_{j,k} c_k & = \frac{\theta^3}{6},  \quad\forall \theta\in[0,1] &  (p=3). \label{(5d)}
\end{align}
\end{subequations}
Conditions \eqref{oc1}-\eqref{3rdorderA} (and \eqref{ocRK1}-\eqref{ocRK3})
are {\em quadrature conditions}; i.e.~they are necessary and sufficient for
third order convergence when the method is applied to a pure quadrature
problem $u'(t) = f(t)$.  

For a RK method of order $p$,
the convergence rate of the dense output values will be the same as
that of the RK step outputs as long as the 
dense output formula has order $p-1$ \cite{hairer1993}.

\subsubsection{Continuity at the endpoints}
It is natural to require $ \bb_j(0)=0$ and 

\begin{align} \label{rightendpointcont}
\bb_j(1) = b_j
\end{align}
for a dense output
formula, expressing continuity as $\theta\to 0^+$ and as $\theta \to 1^-$,
respectively, in \eqref{dense-output}.  
These assumptions are made in \cite{zennaro1986natural} and subsequent
works.  However, our interest is in
applying SSP methods to nonlinear hyperbolic PDEs, whose solutions are
not continuous.  Indeed, many modern spatial discretizations for 
hyperbolic PDEs are based on a representation that is discontinuous at every
node of the mesh.  Thus in our study we allow for extensions that are
not continuous at the endpoints.  As we will see, the SSP conditions
turn out to imply continuity as $\theta \to 0^+$, and the optimal
formulae we find all satisfy continuity also as $\theta \to 1^-$.

\subsection{Strong stability preserving dense output}
We are interested in dense output formulae
that also possess the SSP property.
To define the {\em SSP coefficient of the dense output formula}
we make use of the following inequalities, which are just \eqref{bineq}
with $b$ replaced by $\bb$:
\begin{align}
    \bb(\theta)^\top(I+rA)^{-1} & \ge 0 & r\bb(\theta)^\top(I+rA)^{-1}\ee & \le 1 
    & \text{ for } 0 \le \theta \le 1. \label{bthineq}
\end{align}
Here and in what follows,  $\bb:[0,1]\to \mathbb{R}^s$ denotes the function $(\bb_1,\ldots,\bb_s)$.
The SSP coefficient of the dense output formula, 
denoted by $\sspcoeff(A,\bb)$, is defined as follows:
\begin{align*}
	\sspcoeff(A,\bb) := \sup 
    	\left\{r\ge 0 : (I + rA)^{-1} \text{ exists and } 
        		\text{\eqref{Aineq} and \eqref{bthineq} hold}
                \text{ for all } 0\le \theta\le 1 \right\}
\end{align*}
if the set in the $\sup$ above is not empty, and $\sspcoeff(A,\bb):=0$ otherwise.
We also define the {\em SSP coefficient of the method with its dense
output formula} as
\begin{align*}
	\sspcoeff(A,b,\bb) & := 
    	\min\left( \sspcoeff(A,b), \sspcoeff(A,\bb)\right).
\end{align*}
This is the coefficient that matters in practice
since it dictates the step size that can be used while guaranteeing
strong stability of the solution at both the step points and dense output
points.

Nonnegativity of the dense output coefficients is necessary in order
that the SSP coefficient $\sspcoeff(A,b,\bb)$ be positive.
\begin{lemma} \label{lem:pos-coeff}
Let a RK method with dense output $(A,b,\bb)$ be given
such that $\sspcoeff(A,b,\bb)>0$.  
Then $a_{i,j}\ge 0$, $b_j\ge 0$, and $\bb_j(\theta) \ge 0$
for all $1\le i,j\le s$ and for all $0\le \theta \le 1$.
\end{lemma}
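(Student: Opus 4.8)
The plan is to leverage Lemma~\ref{lemma1}, which already establishes the analogous statement for a method $(A,b)$ with positive SSP coefficient, and to reduce the dense-output claim to it. The key observation is that the SSP coefficient of the full object, $\sspcoeff(A,b,\bb)=\min(\sspcoeff(A,b),\sspcoeff(A,\bb))$, is assumed positive, so both $\sspcoeff(A,b)>0$ and $\sspcoeff(A,\bb)>0$. The first inequality, together with Lemma~\ref{lemma1}, immediately delivers $a_{i,j}\ge 0$ and $b_j\ge 0$ for all $i,j$. Thus the only genuinely new content is the nonnegativity of $\bb_j(\theta)$ on $[0,1]$, and this should follow from $\sspcoeff(A,\bb)>0$ by essentially the same argument that proves Lemma~\ref{lemma1}.

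First I would fix some $r$ in the (nonempty) set defining $\sspcoeff(A,\bb)$, so that $(I+rA)^{-1}$ exists and both \eqref{Aineq} and \eqref{bthineq} hold. The crucial structural fact is the expansion of the Neumann-type series: since $A(I+rA)^{-1}\ge 0$ and $rA(I+rA)^{-1}\ee\le 1$ componentwise, one has the standard identity relating $b^\top$ (or $\bb(\theta)^\top$) to the ``absolutely monotonic'' representation. Concretely, writing $\betab(\theta)^\top := \bb(\theta)^\top(I+rA)^{-1}$ and $P := rA(I+rA)^{-1}$, the definition gives $\betab(\theta)^\top\ge 0$, and then $\bb(\theta)^\top = \betab(\theta)^\top(I+rA) = \betab(\theta)^\top(I - P)^{-1}\cdot(\text{something})$; more directly, from $\bb(\theta)^\top(I+rA)^{-1}\ge 0$ we recover $\bb(\theta)^\top = \betab(\theta)^\top (I+rA)$, and the nonnegativity of $\bb(\theta)$ follows once we know that $\betab(\theta)^\top(I+rA)\ge 0$ using $\betab(\theta)\ge 0$ together with the sign structure of $(I+rA)$ implied by $A\ge 0$ (which we have just established componentwise from Lemma~\ref{lemma1}). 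This mirrors verbatim the proof of Lemma~\ref{lemma1} in \cite[p.~65]{SSPbook}, only with the constant vector $b$ replaced by the function $\bb(\theta)$.

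The argument is entirely pointwise in $\theta$: for each fixed $\theta\in[0,1]$, condition \eqref{bthineq} is exactly \eqref{bineq} with $b$ replaced by $\bb(\theta)$, so the scalar/vector reasoning of Lemma~\ref{lemma1} applies unchanged to yield $\bb_j(\theta)\ge 0$ for every $j$. Since the same $r$ works uniformly for all $\theta\in[0,1]$ by the definition of $\sspcoeff(A,\bb)$, we obtain $\bb_j(\theta)\ge 0$ for all $1\le j\le s$ and all $0\le\theta\le 1$ simultaneously. Combining this with the $a_{i,j}\ge 0$ and $b_j\ge 0$ from the first part completes the proof.

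I expect no serious obstacle here, since the statement is a direct transcription of Lemma~\ref{lemma1} to the $\theta$-parametrized weights; the only point requiring a little care is to verify that nothing in the proof of Lemma~\ref{lemma1} used properties of $b$ beyond the pair of inequalities in \eqref{bineq}. If, as I expect, that proof uses only \eqref{Aineq} (to control the matrix $A$) and the two inequalities \eqref{bineq} for the weight vector, then replacing $b$ by $\bb(\theta)$ and invoking \eqref{bthineq} in place of \eqref{bineq} is purely mechanical, and the result holds uniformly in $\theta$.
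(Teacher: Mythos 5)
Your proposal is correct and matches the paper's intent: the paper gives no details, stating only that the proof is ``similar to that of Lemma~\ref{lemma1},'' and your reduction---$a_{i,j}\ge 0$, $b_j\ge 0$ from Lemma~\ref{lemma1} via $\sspcoeff(A,b)>0$, then $\bb(\theta)^\top=\bigl[\bb(\theta)^\top(I+rA)^{-1}\bigr](I+rA)\ge 0$ pointwise in $\theta$ using \eqref{bthineq} and $I+rA\ge 0$---is exactly the kind of transcription intended. The brief detour through $(I-P)^{-1}$ is unnecessary, but the identity you finally settle on is the right one and the argument is sound.
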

The proof of Lemma \ref{lem:pos-coeff} is similar to that of 
Lemma \ref{lemma1}.

The requirement of continuity as $\theta \to 0^+$ turns out to be necessary for
SSP dense output.
\begin{lemma}\label{lemma3}
	Let a RK method with dense output of order at least one be given 
    with coefficients $(A,b,\bb)$.  
    If $\sspcoeff(A,b,\bb)>0$, then 
    \begin{align} \label{zerocond}
      \bb_j(0)=0 \quad(1\le j\le s).
    \end{align}
\end{lemma}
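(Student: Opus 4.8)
The plan is to combine the nonnegativity of the dense-output weights with the first-order consistency condition, both evaluated at the left endpoint $\theta=0$. Since $\sspcoeff(A,b,\bb)>0$ by hypothesis, Lemma \ref{lem:pos-coeff} applies and furnishes $\bb_j(\theta)\ge 0$ for every $1\le j\le s$ and every $\theta\in[0,1]$. Specializing this inequality to the endpoint $\theta=0$ yields $\bb_j(0)\ge 0$ for each $j$.

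Next I would invoke the first-order order condition \eqref{oc1}, namely $\sum_{j=1}^s\bb_j(\theta)=\theta$ for all $\theta\in[0,1]$, which holds because the dense output is assumed to have order at least one. Evaluating this identity at $\theta=0$ gives $\sum_{j=1}^s\bb_j(0)=0$.

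Finally, a finite sum of nonnegative real numbers can vanish only if each of its summands vanishes. Combining the two facts above — each $\bb_j(0)$ is nonnegative, yet their sum is zero — therefore forces $\bb_j(0)=0$ for every $1\le j\le s$, which is precisely \eqref{zerocond}.

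I do not expect any genuine obstacle here; the argument is a direct consequence of two earlier results evaluated at a single point. The only point meriting a moment's care is that the nonnegativity supplied by Lemma \ref{lem:pos-coeff} is asserted on the \emph{closed} interval $[0,1]$, so that it remains valid at the endpoint $\theta=0$ itself rather than merely on $(0,1]$; granting this, the conclusion is immediate and requires no smoothness or further structural hypotheses on the weights $\bb_j$.
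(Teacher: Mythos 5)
Your proof is correct and follows exactly the same route as the paper's: apply Lemma \ref{lem:pos-coeff} to get $\bb_j(\theta)\ge 0$, evaluate the first-order condition \eqref{oc1} at $\theta=0$ to get $\sum_j\bb_j(0)=0$, and conclude that each nonnegative summand vanishes. The paper's proof is just a one-line compression of the same argument.
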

\begin{proof}
	By Lemma \ref{lem:pos-coeff} we have $\bb_j(\theta)\ge 0$, and condition
    \eqref{oc1} implies $\sum_j \bb_j(0) = 0$.
\end{proof}

Finally, it is clear from the definitions that
\begin{align} \label{ineqbetweenCs}
\eqref{rightendpointcont} \implies \sspcoeff(A,\bb)\le \sspcoeff(A,b).
\end{align}

\section{Restrictions and non-existence results\label{sec:restrict}}
It turns out that certain restrictions on SSP dense output formulae
appear already when one considers only quadrature problems.  The case
of quadrature is considered in Section \ref{sec:quadrature}, and
general results for Runge--Kutta methods are deduced in Section \ref{barrier}.

In this section we are going to impose some smoothness assumptions on the weight functions 
$\bb_j$ defined on $[0,1]$. To formulate these, we will use the notation 
\[{\mathcal{D}}^{k}(0+)\quad (k\in \mathbb{N}^+)\]
to denote
the set of real functions that are $k$ times right-differentiable at the origin; the symbols $\bb_{j}'(0)$
and $\bb_{j}''(0)$ will denote right-derivatives.

\subsection{A result on non-negative quadrature rules\label{sec:quadrature}}
When we apply a Runge--Kutta method to a problem in which $f$ is independent of $u$,
\begin{align}
    u'(t) & = f(t), & u(t_0) = u_0,
\end{align}
 the dense output reduces to a quadrature rule
\begin{align} \label{quadrule}
    u(t_0 + \theta) \approx u_0 + \sum_{j=1}^s \bb_j(\theta) f(t_0 + \theta c_j).
\end{align}
We assume, without loss of generality, that
the abscissas $c_j$ are distinct (any method \eqref{quadrule} with repeated
abscissas can be rewritten as a method with distinct abscissas).
We show that if the abscissas $c_j$ and weights $\bb_j$ are non-negative,
the accuracy of \eqref{quadrule} is limited.

\begin{theorem}\label{thmquadrule}
    Let a quadrature rule \eqref{quadrule} be given with distinct abscissas
    $c_j\in\mathbb{R}$, and with weight functions $\bb_j\in {\mathcal{D}}^{1}(0+)$ 
that are non-negative in a
    right neighborhood of zero, i.e.
    \begin{align}\label{rightneighborhoodcond}
        \bb_j(\theta) & \ge 0  & \forall j \text{ and }\  \forall \theta & \in [0,\epsilon) \text{ with some } \epsilon>0.
    \end{align}
    If \eqref{quadrule} is exact for quadratic polynomials, i.e.~if \eqref{oc1}-\eqref{oc2} hold, 
    then at least one abscissa is non-positive.
    If  $\bb_j\in {\mathcal{D}}^{2}(0+)$, and \eqref{quadrule} is exact for cubic polynomials, i.e.~if \eqref{oc1}-\eqref{3rdorderA} hold, then at least one abscissa is
    negative.
\end{theorem}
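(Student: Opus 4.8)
The plan is to extract purely local information near $\theta=0^+$ from the order conditions by Taylor-expanding the weight functions $\bb_j$ and exploiting the sign constraints that their non-negativity imposes on the one-sided derivatives. The two assertions differ only in how many derivatives are needed: one derivative suffices for the non-positivity conclusion, while two are required for the strict negativity conclusion, which is why the second part strengthens the hypothesis to $\bb_j\in{\mathcal{D}}^{2}(0+)$.

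For the first assertion I would start from the observation that evaluating \eqref{oc1} at $\theta=0$ gives $\sum_j\bb_j(0)=0$, and since each $\bb_j$ is non-negative on $[0,\epsilon)$ this forces $\bb_j(0)=0$ for every $j$. A function in ${\mathcal{D}}^{1}(0+)$ that vanishes at the origin and stays non-negative has non-negative right-derivative, so $\bb_j'(0)\ge 0$. Writing $\bb_j(\theta)=\bb_j'(0)\,\theta+o(\theta)$, substituting into \eqref{oc1} and \eqref{oc2}, and matching the coefficient of $\theta$ yields $\sum_j\bb_j'(0)=1$ and $\sum_j\bb_j'(0)\,c_j=0$. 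Setting $w_j:=\bb_j'(0)\ge 0$, these say that $0$ is a convex combination of the $c_j$; if every abscissa were strictly positive then $\sum_j w_j c_j$ would be strictly positive (since $\sum_j w_j=1$ forces some $w_j>0$), a contradiction, so at least one abscissa is non-positive.

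For the second assertion I would argue by contradiction, assuming all $c_j\ge 0$. The first-order analysis again gives $w_j\ge 0$, $\sum_j w_j=1$, $\sum_j w_j c_j=0$, and differentiating \eqref{3rdorderA} likewise gives $\sum_j w_j c_j^2=0$. With $w_j,c_j\ge 0$, the relation $\sum_j w_j c_j=0$ forces $w_j c_j=0$ for each $j$; by distinctness of the abscissas at most one can vanish, so there is exactly one index $j_0$ with $c_{j_0}=0$ and $w_{j_0}=1$, while $w_j=0$ and $c_j>0$ for $j\ne j_0$. Now I pass to second order: for $j\ne j_0$ the function $\bb_j$ vanishes at $0$ together with its first right-derivative and remains non-negative, hence $\bb_j''(0)\ge 0$. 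Using $\bb_j(\theta)=\tfrac{1}{2}\bb_j''(0)\,\theta^2+o(\theta^2)$ for $j\ne j_0$ and matching the coefficient of $\theta^2$ in \eqref{oc2} and \eqref{3rdorderA} gives $\sum_j\bb_j''(0)\,c_j=1$ and $\sum_j\bb_j''(0)\,c_j^2=0$. The $j_0$ term drops out of both because $c_{j_0}=0$, so $\sum_{j\ne j_0}\bb_j''(0)\,c_j^2=0$ with non-negative summands and $c_j>0$ forces $\bb_j''(0)=0$ for all $j\ne j_0$; but then $\sum_{j\ne j_0}\bb_j''(0)\,c_j=0\ne 1$, the desired contradiction.

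The hard part will be justifying the two sign facts about one-sided derivatives: that a function in ${\mathcal{D}}^{1}(0+)$ vanishing at the origin and non-negative has $\bb_j'(0)\ge 0$, and that if its first right-derivative also vanishes then $\bb_j''(0)\ge 0$. Both follow from the right-Taylor expansion, since a negative leading derivative would push the function below zero for small $\theta>0$, but this is precisely where the minimal smoothness hypotheses ${\mathcal{D}}^{1}(0+)$ and ${\mathcal{D}}^{2}(0+)$ are indispensable and where the argument must be phrased entirely in terms of one-sided quantities. Everything else reduces to matching Taylor coefficients and elementary reasoning about non-negative linear combinations, with the distinctness of the $c_j$ playing the essential role of excluding more than one zero abscissa.
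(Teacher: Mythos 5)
Your proposal is correct and follows essentially the same route as the paper's proof: evaluate and (right-)differentiate the quadrature order conditions at $\theta=0$, use non-negativity to force $\bb_j(0)=0$, $\bb_j'(0)\ge 0$, isolate the single zero abscissa via $\sum_j \bb_j'(0)c_j=0$, and then derive the contradiction between $\sum_j \bb_j''(0)c_j^2=0$ and $\sum_j \bb_j''(0)c_j=1$. The only differences are presentational (Taylor-coefficient matching versus direct differentiation, and phrasing the first part as a contrapositive rather than assuming $c_j\ge 0$ at the outset), and the one-sided Taylor facts you flag as the ``hard part'' are exactly the ones the paper also relies on implicitly.
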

\begin{proof}
    We suppose in the proof that $c_j\ge 0$ ($1\le j\le s$) (\textit{otherwise the proof is complete}).
    Formula \eqref{oc1} and \eqref{rightneighborhoodcond} at $\theta=0$ imply 
    \begin{equation}\label{thm6prooflabel1}
    \bb_{j}(0)=0 \quad (1\le j\le s).  
     \end{equation}
    But then \eqref{rightneighborhoodcond} also implies $\bb'_{j}(0)\ge 0$.
    Differentiation of \eqref{oc2} at $0$ yields $\sum_{j=1}^s \bb'_{j}(0)c_j = 0$, so from 
    the non-negativity we get 
\begin{equation}\label{thm6prooflabel}
    \bb_{j}'(0) = 0 \text{ or } c_j=0 \text{ for each } j.
\end{equation}
    Differentiation of \eqref{oc1} at $0$ shows that $\bb_{j_0}'(0) \ne 0$ for some $j_0$, hence $c_{j_0}=0$.

    \textit{To prove the second statement,} we can thus assume that exactly one abscissa is zero
    (since they are distinct).
    Let the abscissas be ordered so that $c_1=0$; then $c_j>0$ for $2\le j \le s$.
    The derivative of \eqref{oc1} at $\theta = 0$ and \eqref{thm6prooflabel} now imply 
\begin{equation}\label{repetition}
    \bb_{1}'(0) = 1 \text{ and } \bb_j'(0) = 0 \text{ for } 2 \le j \le s.
\end{equation}    
     So from \eqref{rightneighborhoodcond} and 
    \eqref{thm6prooflabel1} we obtain that
    $\bb_{j}''(0)\ge 0$ for $2 \le j\le s$.  Since \eqref{3rdorderA} says
    \[\sum_{j=1}^s \bb_{j}''(0) c_j^2 = \sum_{j=2}^s \bb_{j}''(0) c_j^2= 0,\]
    we also have $\bb_{j}''(0)=0$ for all $2 \le j\le s$.  But this is incompatible with the second derivative of \eqref{oc2} at $\theta=0$.
\end{proof}

\subsection{Restrictions on SSP dense ouput for Runge--Kutta methods\label{barrier}}
For a given SSP RK method $(A,b)$ of $s$ stages and order $p$, 
we seek to find functions 
$\bb_j$ ($1 \le j \le s$) that satisfy the order 
conditions \eqref{oc} up to at least order $p-1$ and the 
SSP conditions \eqref{bthineq}.
The next theorem restricts the form of possible formulae
and methods with positive SSP coefficient.
\begin{theorem} \label{thm2}
Let an SSP Runge--Kutta method be given with 
coefficients $(A,b)$ where $A$ satisfies \eqref{assumption-z},
and let $(A,\bb)$ be a 
dense output formula of order at least two with 
weight functions $\bb_j\in {\mathcal{D}}^{1}(0+)$.
Suppose that $\sspcoeff(A,b,\bb)>0$.
Then the first row of $A$ is identically zero and
\begin{subequations} \label{2coeffcond}
  \begin{align}
  \bb'_{1}(0) & = 1 \\ 
   \bb'_{j}(0) & = 0 & \text{ for all } 2\le j  \le s.
  \end{align}
\end{subequations}
\end{theorem}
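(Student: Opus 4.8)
The plan is to leverage the quadrature-level result of Theorem \ref{thmquadrule}, since the dense output of a Runge--Kutta method applied to a pure quadrature problem reduces exactly to the quadrature rule \eqref{quadrule}, and the SSP hypothesis $\sspcoeff(A,b,\bb)>0$ gives me access to the nonnegativity machinery I need. First I would invoke Lemma \ref{lem:pos-coeff} to conclude that all entries $a_{i,j}$, all $b_j$, and all values $\bb_j(\theta)$ for $\theta\in[0,1]$ are nonnegative; in particular the weight functions satisfy the right-neighborhood nonnegativity hypothesis \eqref{rightneighborhoodcond}. Since the formula has order at least two, conditions \eqref{oc1} and \eqref{oc2} hold. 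Then, after reducing to distinct abscissas (as justified in Section \ref{sec:quadrature}), the first statement of Theorem \ref{thmquadrule} tells me that at least one abscissa $c_j$ is non-positive; combined with nonnegativity of the $c_j$ (which follows since $c_i=\sum_j a_{i,j}$ and the $a_{i,j}\ge 0$), this forces at least one abscissa to equal zero.

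Next I would reuse the internal steps of the proof of Theorem \ref{thmquadrule} rather than just its conclusion, since the theorem's argument already establishes precisely the derivative relations \eqref{2coeffcond} that I want. The key chain is: nonnegativity plus $\sum_j \bb_j(0)=\theta|_{\theta=0}=0$ forces $\bb_j(0)=0$ for all $j$ (this is also Lemma \ref{lemma3}); nonnegativity together with $\bb_j(0)=0$ then forces $\bb_j'(0)\ge 0$; differentiating \eqref{oc2} at $\theta=0$ gives $\sum_j \bb_j'(0)c_j=0$, so by nonnegativity each term vanishes, meaning $\bb_j'(0)=0$ whenever $c_j>0$. Differentiating \eqref{oc1} gives $\sum_j \bb_j'(0)=1$, so some $\bb_{j_0}'(0)\neq 0$, and the previous step forces $c_{j_0}=0$ for that index. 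This simultaneously identifies an index with zero abscissa and sets up the derivative conditions.

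The remaining and most delicate step is to translate ``some abscissa is zero'' into the specific structural claim that the \emph{first row of $A$} is identically zero, and to pin down the derivative values exactly as in \eqref{2coeffcond}. The abscissa relation $c_i=\sum_{j=1}^s a_{i,j}$ together with $a_{i,j}\ge 0$ shows that $c_i=0$ holds if and only if row $i$ of $A$ is identically zero. Here is where assumption \eqref{assumption-z} does the crucial work: it stipulates that $A$ has at most one identically-zero row and, if present, that row is the first. Since I have just shown at least one $c_i$ vanishes, exactly one row of $A$ is zero and it must be the first; renumbering is unnecessary because the normalization is already baked into \eqref{assumption-z}. This also makes the abscissas distinct-enough reasoning consistent, giving $c_1=0$ and $c_j>0$ for $2\le j\le s$. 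Feeding this back into the derivative relations then yields $\bb_1'(0)=1$ and $\bb_j'(0)=0$ for $2\le j\le s$, which is exactly \eqref{2coeffcond}.

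I expect the main obstacle to be the careful bookkeeping around the distinct-abscissa reduction and the permutation/normalization conventions: Theorem \ref{thmquadrule} is stated for distinct abscissas, whereas a general SSP method may have repeated ones, so I must argue that the zero-abscissa conclusion survives the merging of equal-abscissa stages and is correctly attributed back to a genuine zero row of the original $A$ rather than to an artifact of the reduction. The cleanest route is probably to run the derivative argument directly on the method's own weights (bypassing the merging step entirely), using nonnegativity of $c_i=\sum_j a_{i,j}$ to rule out negative abscissas outright, and thereby obtain a vanishing abscissa without ever needing strict distinctness; assumption \eqref{assumption-z} then closes the gap to the first-row conclusion.
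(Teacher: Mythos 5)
Your proposal is correct and follows essentially the same route as the paper: invoke Lemma \ref{lem:pos-coeff} for nonnegativity of $\bb_j$ and $c_j$, rerun the derivative argument from the proof of Theorem \ref{thmquadrule} (which, as you note, does not actually need distinct abscissas in the steps used), and then use assumption \eqref{assumption-z} together with $c_i=\sum_j a_{i,j}$ and $a_{i,j}\ge 0$ to force $c_1=0$, a zero first row, and the derivative values in \eqref{2coeffcond}. Your closing observation --- that one should bypass the distinct-abscissa reduction and argue directly on the method's own weights --- is precisely how the paper handles the same subtlety.
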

\begin{proof}
    Consider a method that satisfies the assumptions in the theorem.
    By Lemma \ref{lem:pos-coeff} we have $\bb_j\ge 0$ and $c_j\ge0$ ($1\le j\le s$).
        The method is 2nd-order accurate for quadratures, 
        so  we can reread the proof of Theorem
    \ref{thmquadrule} up to \eqref{repetition} but omitting  the parts written \textit{in italics}.  
    Note also that by assumption \eqref{assumption-z} we have $c_1 = 0$ (and
    $c_j \ne 0$ for $2\le j \le s$)  in the second part of that proof.
\end{proof}

The next result indicates that no dense output formula of order three or higher
exists. 
\begin{theorem}\label{thmnegresultorder3}
Let an SSP Runge--Kutta method $(A,b)$ be given such that $A$ satisfies \eqref{assumption-z},
along with a 
dense output formula $\bb\in {\mathcal{D}}^{2}(0+)$
such that 
$\sspcoeff(A,b,\bb)>0$.  Then the order of accuracy of the
dense output formula is at most two.
\end{theorem}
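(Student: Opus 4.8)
The plan is to prove \emph{Theorem \ref{thmnegresultorder3}} by reduction to the quadrature result \emph{Theorem \ref{thmquadrule}}, using the structural information already extracted in \emph{Theorem \ref{thm2}}. Suppose for contradiction that the dense output formula has order of accuracy three or higher. Then in particular the quadrature conditions \eqref{oc1}--\eqref{3rdorderA} all hold (order three for the pure quadrature problem is implied by order three of the full method, since the quadrature conditions are a subset of the order conditions). The abscissas $c_j$ are nonnegative by Lemma \ref{lem:pos-coeff}, and the weight functions $\bb_j$ are nonnegative on $[0,1]$ by the same lemma, so in particular they are nonnegative in a right neighborhood of zero, giving \eqref{rightneighborhoodcond}. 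The smoothness hypothesis $\bb_j\in{\mathcal{D}}^{2}(0+)$ is exactly what the second statement of \emph{Theorem \ref{thmquadrule}} requires.

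Next I would invoke the second statement of \emph{Theorem \ref{thmquadrule}}: if a nonnegative quadrature rule with distinct abscissas is exact for cubic polynomials, then at least one abscissa is strictly negative. But we have just argued that all $c_j\ge 0$. This is the contradiction. One technical point to handle carefully is the assumption in \emph{Theorem \ref{thmquadrule}} that the abscissas are distinct. The reduction to distinct abscissas is addressed in the setup of Section \ref{sec:quadrature}: any quadrature rule with repeated abscissas can be collapsed into one with distinct abscissas by summing the corresponding weight functions, and this operation preserves nonnegativity of the combined weights, the smoothness class ${\mathcal{D}}^{2}(0+)$, and the quadrature conditions \eqref{oc1}--\eqref{3rdorderA}. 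So I would remark that we may pass to the distinct-abscissa reduction without loss of generality before applying the theorem.

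The main obstacle I anticipate is not the logical skeleton, which is short, but ensuring that the smoothness and nonnegativity hypotheses transfer cleanly to the reduced (distinct-abscissa) rule, and that ``order of accuracy at least three for the dense output formula'' genuinely delivers the cubic quadrature exactness \eqref{3rdorderA} on the nose rather than merely the coupled condition \eqref{(5d)}. Since \eqref{3rdorderA} is itself one of the stated order-three conditions and is purely a quadrature condition, this is immediate, but it is worth stating explicitly so the reader sees that \emph{Theorem \ref{thmquadrule}} applies verbatim.

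In fact, the cleanest presentation folds the structural conclusion of \emph{Theorem \ref{thm2}} in at the start: since $\sspcoeff(A,b,\bb)>0$ and the order is at least two, \emph{Theorem \ref{thm2}} already gives that exactly one abscissa equals zero (namely $c_1=0$ after the ordering in \eqref{assumption-z}) and the rest are positive. Thus all abscissas are nonnegative with precisely one vanishing, which is exactly the configuration the second part of the proof of \emph{Theorem \ref{thmquadrule}} analyzes; assuming order three then forces $\bb_j''(0)=0$ for all $2\le j\le s$, contradicting the second derivative of \eqref{oc2} at $\theta=0$. I would phrase the final proof as a single short paragraph: assume order at least three, note all quadrature conditions through \eqref{3rdorderA} hold, appeal to \emph{Theorem \ref{thmquadrule}} (after the standard distinct-abscissa reduction) to conclude some $c_j<0$, and contradict the nonnegativity of the abscissas guaranteed by Lemma \ref{lem:pos-coeff}.
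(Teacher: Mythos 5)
Your proposal is correct and follows essentially the same route as the paper, which also derives the result directly from Lemma \ref{lem:pos-coeff} (nonnegativity of the abscissas and weight functions) combined with Theorem \ref{thmquadrule} (a third-order-exact nonnegative quadrature rule needs a negative abscissa). Your additional remarks on the distinct-abscissa reduction and on \eqref{3rdorderA} being a pure quadrature condition are sound elaborations of details the paper leaves implicit.
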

\begin{proof}
This follows from Lemma \ref{lem:pos-coeff} and Theorem \ref{thmquadrule},
which imply that an SSP dense output formula cannot be third order accurate
even for quadratures.
\end{proof}

\section{Construction of 1st- and 2nd-order formulae\label{formulae}}
In this section we develop dense output formulae 
for the optimal SSP Runge--Kutta methods of orders 1 and 2.  As always,
we assume condition \eqref{assumption-z}.
It turns out that dense output formulae of order one or two can
be obtained in a simple, general way for many SSP methods.

From now on it is natural to assume that each weight $\bb_j$ is a polynomial whose 
degree, $D$,  is equal to or greater than the desired order of accuracy 
of the dense output formula, $p-1\le D$.
That is, throughout Section \ref{formulae}, we represent $\bb_j$ as
\begin{align}\label{bthetarepresentation}
\bb_j(\theta) & = \sum_{k=0}^{D} \bb_{j,k} \theta^k
\end{align}
with some coefficients $\bb_{j,k}\in \mathbb{R}$ ($1\le j\le s$, $0\le k\le D$).

\subsection{First-order dense output}

\begin{theorem}
Let a Runge--Kutta method of order at least one be given with 
coefficients $(A,b)$ and SSP coefficient
$\sspcoeff(A,b)>0$.  Then a first-order dense output with the same SSP 
coefficient $\sspcoeff(A,\bb)=\sspcoeff(A,b)$ is obtained by taking $\bb_j(\theta) := b_j \theta$ ($1\le j\le s$);
i.e.
\begin{align} \label{1storder}
    u_{n+\theta} & = u_n + h \,\theta \sum_{j=1}^s b_j f(y_j).
\end{align}
\end{theorem}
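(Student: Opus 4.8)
The plan is to establish two things: that the choice $\bb_j(\theta):=b_j\theta$ produces a first-order accurate dense output, and that it inherits the method's SSP coefficient, i.e. $\sspcoeff(A,\bb)=\sspcoeff(A,b)$.

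First I would verify the single order condition needed for first-order accuracy, namely \eqref{oc1}. Substituting $\bb_j(\theta)=b_j\theta$ gives $\sum_{j=1}^s \bb_j(\theta)=\theta\sum_{j=1}^s b_j=\theta$, where the last equality is the method's own first-order condition \eqref{ocRK1}. Thus \eqref{oc1} holds for all $\theta\in[0,1]$ and the formula is indeed first-order.

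The heart of the argument is to show that the admissible sets of $r$ in the two $\sup$ definitions coincide. Since both $\sspcoeff(A,b)$ and $\sspcoeff(A,\bb)$ impose identical conditions on $A$ (invertibility of $I+rA$ together with \eqref{Aineq}), it suffices to compare the weight conditions. With $\bb(\theta)=\theta b$ we have $\bb(\theta)^\top(I+rA)^{-1}=\theta\,b^\top(I+rA)^{-1}$. For fixed $r\ge 0$, the nonnegativity requirement in \eqref{bthineq} reads $\theta\,b^\top(I+rA)^{-1}\ge 0$ for all $\theta\in[0,1]$; because $\theta\ge 0$, this is equivalent to $b^\top(I+rA)^{-1}\ge 0$, which is exactly the nonnegativity part of \eqref{bineq}.

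For the boundedness condition I would exploit monotonicity in $\theta$. Given the nonnegativity just established and $\ee\ge 0$ componentwise, the scalar $b^\top(I+rA)^{-1}\ee$ is nonnegative, so $r\,\theta\,b^\top(I+rA)^{-1}\ee$ is nondecreasing in $\theta$ on $[0,1]$ and attains its maximum at $\theta=1$. Hence $r\,\bb(\theta)^\top(I+rA)^{-1}\ee\le 1$ for all $\theta\in[0,1]$ is equivalent to its value at $\theta=1$, namely $r\,b^\top(I+rA)^{-1}\ee\le 1$, the boundedness part of \eqref{bineq}. Combining the two equivalences, the feasible $r$-set for \eqref{bthineq} (over all $\theta$) coincides exactly with that for \eqref{bineq}, so the two suprema agree and $\sspcoeff(A,\bb)=\sspcoeff(A,b)$. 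The only point requiring care is the reduction from ``for all $\theta\in[0,1]$'' to the single value $\theta=1$; this is precisely where nonnegativity of $b^\top(I+rA)^{-1}$ is used, and it is what makes the linear-in-$\theta$ ansatz succeed without loss.
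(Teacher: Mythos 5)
Your proof is correct and follows essentially the same route as the paper's: verify \eqref{oc1} via \eqref{ocRK1}, and observe that for $\bb(\theta)=\theta b$ the conditions \eqref{bthineq} reduce to \eqref{bineq}. You spell out the reduction (nonnegativity of $\theta\,b^\top(I+rA)^{-1}$ and monotonicity in $\theta$ of the bound) in more detail than the paper, and you obtain the equality of SSP coefficients by showing the feasible $r$-sets coincide rather than by invoking \eqref{ineqbetweenCs}, but these are only minor elaborations of the same argument.
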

\begin{proof}
It suffices to check that condition \eqref{oc1} is satisfied
and conditions \eqref{bthineq} hold with $r=\sspcoeff(A,b)$.
By taking into account $\bb(\theta) =  b \theta$ ($\theta\in [0,1]$), condition \eqref{oc1} follows from \eqref{ocRK1}, 
and conditions \eqref{bthineq} follow from the fact that method
$(A,b)$ satisfies \eqref{bineq} with $r=\sspcoeff(A,b)$.
\end{proof}

\subsection{Second-order dense output}\label{sec:denseoutputorder2}
Next we turn our attention to second-order dense output.
According to Theorem \ref{thm2}, and applying assumption \eqref{assumption-z} 
we have that the first row of $A$ is zero.
Motivated by \eqref{zerocond}, \eqref{2coeffcond}, \eqref{rightendpointcont} and \eqref{oc1}, 
and taking $D=2$ in \eqref{bthetarepresentation}, 
we see that these conditions yield a unique set of quadratic polynomials $\bb_j$:
\begin{subequations} \label{2ndorder_coeffs}
\begin{align}
	\bb_1(\theta) & = \theta - (1-b_1)\theta^2 \\
    \bb_j(\theta) & = \theta^2 b_j & (2\le j\le s).
\end{align}
\end{subequations}
For many SSP RK methods $(A,b)$, this choice of $\bb$ gives a second-order SSP dense output formula with $\sspcoeff(A,\bb) = \sspcoeff(A,b)$ (cf.~\eqref{ineqbetweenCs}).

\begin{theorem} \label{DO2thm}
Let a Runge--Kutta method of order at least two be given 
with coefficients $(A,b)$ and SSP coefficient $\sspcoeff(A,b)>0$.
Suppose that the first row of $A$ is zero and let $\bb$ be defined by \eqref{2ndorder_coeffs}. Then the following statements are true.
\begin{enumerate}
\item $(A,\bb)$ is an SSP dense output formula
of order at least two. 
\item If 
\begin{align} \label{xineq}
  b^\top(I + \sspcoeff(A,b) A)^{-1} \ee \le 1 - \frac{\sspcoeff(A,b)}{4},
\end{align}
then $\sspcoeff(A,\bb) = \sspcoeff(A,b)$. 
\item If $\sspcoeff(A,b)\le 2$,
then $\sspcoeff(A,\bb)= \sspcoeff(A,b)$.
\end{enumerate}
\end{theorem}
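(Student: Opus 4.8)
The plan is to prove the reverse inequality $\sspcoeff(A,\bb) \ge \sspcoeff(A,b)$ under the hypothesis $r := \sspcoeff(A,b) \le 2$; since $\bb$ defined by \eqref{2ndorder_coeffs} satisfies $\bb_j(1) = b_j$, the bound \eqref{ineqbetweenCs} supplies the matching inequality $\sspcoeff(A,\bb) \le \sspcoeff(A,b)$, and equality follows. Concretely, I would verify that the dense-output conditions \eqref{bthineq} hold at the single value $r$. First I would rewrite the weights as $\bb(\theta) = \theta^2 b + \theta(1-\theta)\,e_1$, where $e_1$ is the first unit vector. Since the first row of $A$ is zero we have $e_1^\top A = 0$, hence $e_1^\top(I+rA)^{-1} = e_1^\top$, and therefore $\bb(\theta)^\top(I+rA)^{-1} = \theta^2\,b^\top(I+rA)^{-1} + \theta(1-\theta)\,e_1^\top$. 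This is componentwise nonnegative on $[0,1]$ because $b^\top(I+rA)^{-1}\ge 0$ by \eqref{bineq}, $e_1^\top \ge 0$, and the scalars $\theta^2,\ \theta(1-\theta)$ are nonnegative; so the left inequality in \eqref{bthineq} is automatic.

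The core of the argument is the right inequality in \eqref{bthineq}. Setting $x := b^\top(I+rA)^{-1}\ee$, which satisfies $0 \le x \le 1/r$ by \eqref{bineq}, the condition becomes $r\,g(\theta)\le 1$ on $[0,1]$ for the scalar quadratic $g(\theta) := \bb(\theta)^\top(I+rA)^{-1}\ee = \theta + (x-1)\theta^2$. I would compute its maximum over $[0,1]$: when $x \ge 1/2$ the vertex lies at or beyond $\theta = 1$, so $\max g = g(1) = x$; when $x < 1/2$ the interior critical point $\theta^\star = 1/(2(1-x))$ gives $\max g = 1/(4(1-x))$. A short case split then finishes the proof using $r\le 2$: if $x\ge 1/2$, then $r\cdot\max g = rx \le 1$ directly from \eqref{bineq}; if $x < 1/2$, then $4(1-x) > 2 \ge r$, so $r\cdot\max g = r/(4(1-x)) < 1$. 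In both cases \eqref{bthineq} holds at $r$, giving $\sspcoeff(A,\bb)\ge r$.

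The main obstacle I anticipate is conceptual rather than computational: the sufficient condition \eqref{xineq} from the preceding part is \emph{not} implied by $r \le 2$ alone, since for $x \ge 1/2$ one may have $x > 1 - r/4$ so that \eqref{xineq} fails while the conclusion still holds. The resolution, and the key observation, is to locate the threshold $x = 1/2$ at which the maximizer of $g$ leaves the boundary point $\theta=1$ and moves to the interior: above the threshold boundedness is inherited from the method's own bound $rx \le 1$, while below it the interior maximum $1/(4(1-x))$ is controlled precisely when $r \le 4(1-x)$, which $r \le 2$ guarantees. Thus $r = 2$ is exactly the critical step size separating the two mechanisms, and recognizing this split is what makes the short computation go through.
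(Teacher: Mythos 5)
Your argument is correct and follows essentially the same route as the paper's proof: the same decomposition $\bb(\theta)=\theta^2 b+\theta(1-\theta)\ee_1$, the same use of $\ee_1^\top(I+rA)^{-1}=\ee_1^\top$ for the nonnegativity condition, and the same reduction of the second SSP condition to bounding the quadratic $\theta+(x-1)\theta^2$ on $[0,1]$ via a case split on the location of its vertex (your $x$ is the paper's $\gamma$, and your threshold $x=1/2$ is exactly the paper's division between $\gamma\in[0,\tfrac12]$ and $\gamma\ge\tfrac12$). Two minor omissions: you never verify the order conditions \eqref{oc1}--\eqref{oc2} required for statement 1 (they follow in one line from \eqref{ocRK1}--\eqref{ocRK2} together with $c_1=0$), and you leave statement 2 unproved even though your own computation already delivers it --- in the case $x<1/2$ the interior maximum satisfies $r/(4(1-x))\le 1$ precisely when $x\le 1-r/4$, which is \eqref{xineq}, while for $x\ge 1/2$ the bound $rx\le 1$ suffices with no extra hypothesis.
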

\begin{proof}
Since formulae \eqref{2ndorder_coeffs} satisfy \eqref{rightendpointcont}, we have 
 $\sspcoeff(A,\bb)\le \sspcoeff(A,b)$ due to 
\eqref{ineqbetweenCs}.
So it suffices to check that the order conditions \eqref{oc1}-\eqref{oc2}
are satisfied and that the SSP conditions \eqref{bthineq} hold
with $r=\sspcoeff(A,b)$.  

The order conditions are easily checked, since $\sum_{j=1}^s \bb_j(\theta)  =  
\theta + \theta^2\left(-1+\sum_{j=1}^s b_j\right)=\theta$ due to \eqref{ocRK1}.
Similarly, since $c_1=0$ because the first row of $A$ is zero, we have $\sum_{j=1}^s \bb_j(\theta) c_j  = \sum_{j=2}^s \bb_j(\theta) c_j =\theta^2 \sum_{j=2}^s b_j c_j=\theta^2 \sum_{j=1}^s b_j c_j=\frac{\theta^2}{2}$ due to \eqref{ocRK2}.

To verify the first SSP condition in \eqref{bthineq}, we let $M := (I+\sspcoeff(A,b) A)^{-1}$
and $\ee_1 := (1,0,\dots,0)^\top$.
Since the first row of $A$ is zero, the first row of $M$ is $\ee_1^\top$, hence
$\ee_1^\top M = \ee_1^\top$.
Observe that 
$\bb(\theta) = \theta^2 b + (\theta-\theta^2)\ee_1$, so for $0 \le \theta \le 1$ we have
$\bb(\theta)^\top M  = \theta^2 b^\top M + (\theta-\theta^2)\ee_1^\top M=\theta^2 b^\top M + (\theta-\theta^2)\ee_1^\top \ge \theta^2 b^\top M \ge 0$, because $b$ satisfies \eqref{bineq}.

As for the second SSP condition in \eqref{bthineq}, we set $\gamma:=b^\top M \ee$ and $\varphi(\theta):=\sspcoeff(A,b)(\gamma-1)\theta^2 + \sspcoeff(A,b)\theta-1$ to rewrite it as
\[
	\sspcoeff(A,b) \bb(\theta)^\top M \ee  = \sspcoeff(A,b) \theta^2 \gamma + \sspcoeff(A,b) (\theta-\theta^2)\ee_1^\top M \ee = \]
\[
 \sspcoeff(A,b) \theta^2 \gamma + \sspcoeff(A,b) (\theta-\theta^2)=
				 \varphi(\theta)+1.
\]
Hence 
$\sspcoeff(A,b) \bb(\theta)^\top M \ee\le 1$ is equivalent to 
\begin{equation}\label{phiineq}
\varphi(\theta)\le 0 \quad \forall\theta\in [0,1].
\end{equation} 
In what follows we will use 
\begin{equation}\label{auxineq}
\sspcoeff(A,b)>0, \quad \gamma\ge 0\quad\text{and}\quad \sspcoeff(A,b) \gamma\le 1 
\end{equation}
to show \eqref{phiineq}; the last two inequalities of \eqref{auxineq} follow from \eqref{bineq} again.\\ 
\indent $\bullet$ For $\gamma=1$, we have $\sspcoeff(A,b)\le 1$ and $\varphi(\theta)=
\sspcoeff(A,b)\theta-1$, hence \eqref{phiineq} holds and the proof is complete.\\
\indent $\bullet$ For $\gamma>1$, the strict global minimum of the parabola $\varphi$ is attained at $\theta^*=\frac{1}{2(1-\gamma)}<0$. Hence \eqref{phiineq} is equivalent to $\varphi(1)\le 0$. But 
$\varphi(1)=\sspcoeff(A,b)\gamma-1\le 0$ due to \eqref{auxineq}.\\ 
\indent $\bullet$ For $\gamma\in [0,1)$, the strict global maximum of the parabola $\varphi$ is attained at $\theta^*=\frac{1}{2(1-\gamma)}>0$. For $\gamma\in \left(\frac{1}{2},1\right)$, we have $\theta^*>1$, so
\eqref{phiineq} is equivalent to $\varphi(1)\le 0$, which we have already seen to hold.
Finally, for $\gamma\in \left[0,\frac{1}{2}\right]$, we see that $\theta^*\in(0,1]$, so \eqref{phiineq} is equivalent to $\varphi(\theta^*)\le 0$. Here $\varphi(\theta^*)=\frac{\sspcoeff(A,b)+4\gamma-4}{4 (1-\gamma )}$, and
\eqref{xineq} or $\sspcoeff(A,b)\le 2$ implies $\varphi(\theta^*)\le 0$. 
\end{proof}

\subsection{Implicit SSP methods}
First-order SSP dense output for any implicit SSP method can be constructed
according to \eqref{1storder}.  However, since all known optimal implicit SSP methods are
diagonally implicit, Theorem \ref{thm2} implies that they have no second-order
SSP dense output with polynomial weights.  It is possible that there exist implicit methods with the
first row of $A$ equal to zero such that $\sspcoeff(A,b)>0$ and some
$\bb$ such that $\sspcoeff(A,\bb)>0$.  However, since such methods will have
SSP coefficient even smaller than those of the optimal implicit SSP methods,
we have not conducted a search for them.

\subsection{Explicit SSP methods}
The theorem in Section \ref{sec:denseoutputorder2} does not settle the question of 
existence of dense output formulae of order two for particular RK methods
of interest.  
In this section we develop such formulae for some 
optimal explicit SSP RK methods $(A,b)$.

One can directly verify that the following optimal explicit SSP methods satisfy condition \eqref{xineq} 
in Theorem \ref{DO2thm}, and
so \eqref{2ndorder_coeffs} provides a 2nd-order SSP dense output formula $\bb$ 
with $\sspcoeff(A,\bb)=\sspcoeff(A,b)>0$:
\begin{itemize}
    \item second-order methods with two \cite{shu1989efficient}, three, or four \cite{spiteri2002} stages;
    \item third-order methods with three \cite{shu1989efficient} or four \cite{spiteri2002} stages;
    \item the fourth-order method with five stages \cite{spiteri2002}.
\end{itemize}
Coefficients for all of the methods above can be found, for instance in
\cite[Chapter 6]{SSPbook}.
Computations show that explicit optimal methods with more stages than those listed above
do not satisfy \eqref{xineq} and we have not found a way to construct
dense output formulae with SSP coefficient as large as that of the method
itself for any of them.

As an illustration, we show in the following subsection that
there exists a 2nd-order $s$-stage dense output formula 
with $\sspcoeff(A,\bb)\ge \sspcoeff(A,b)$ where the weights $\bb_{j}$ are quadratic 
polynomials in $\theta$ if and only if $s\in\{2,3,4\}$.

\subsection{Dense output for optimal explicit second-order SSP RK methods}\label{subsection41}
A family of optimal 2nd-order SSP Runge--Kutta methods with $s$ stages
(for $s\ge 2$) was introduced in \cite{spiteri2002}.  We have already
seen above that \eqref{2ndorder_coeffs} gives an SSP dense output with
$\sspcoeff(A,\bb) = \sspcoeff(A,b)$ for the members of this
family with $s=2,3,4$.  In this section we show that no quadratic
dense output with $\sspcoeff(A,\bb) = \sspcoeff(A,b)$ exists for
the members of this family with $s\ge 5$.

We will use the following properties 
of an optimal 2nd-order SSP RK method $(A,b)$ with $s$ stages \cite{SSPbook}: 
it satisfies \eqref{assumption-z}; it has $b_j=1/s$ and $c_j = (j-1)/(s-1)$ for
$1\le j \le s$; $\sspcoeff(A,b)=s-1$; and it satisfies  
\begin{align*}
(I+\sspcoeff(A,b) A)^{-1} & = \begin{pmatrix} 
1  &   &   &    \\
-1 & 1 &   &   \\
   & \ddots & \ddots &  \\
   &   & -1 & 1 
\end{pmatrix}
\end{align*}
(with zeros elsewhere). 

A necessary condition for $\sspcoeff(A,\bb)\ge \sspcoeff(A,b)$ is that the SSP conditions \eqref{bthineq} with $r:=\sspcoeff(A,b)$ are fulfilled.
Thus conditions \eqref{bthineq} lead to the inequalities
\begin{align} \label{orig2ineq}
0 \le \bb_{s}(\theta) \le \bb_{s-1}(\theta) \le \ldots \le \bb_{2}(\theta) \le \bb_1(\theta) \le \frac{1}{\sspcoeff(A,b)},\quad \forall\theta\in [0,1].
\end{align}
On the one hand, by applying Lemma \ref{lemma3}, Theorem \ref{thm2}, condition \eqref{oc1} and
the representation \eqref{bthetarepresentation}, we get that 
\begin{subequations}\label{label18ab}
\begin{align}
\bb_j(\theta) & = \bb_{j,2}\, \theta^2  \quad \quad (2 \le j \le s) \\
\bb_1(\theta)  & = \theta - \theta^2 \sum_{j=2}^s \bb_{j,2}
\end{align}
\end{subequations}
should hold with some constants $\bb_{j,2}$.
Hence \eqref{orig2ineq} evaluated at $\theta=1$ leads to
the necessary conditions
\begin{subequations}
\begin{align}
0 \le \bb_{s,2} \le \bb_{s-1,2} \le \ldots \le \bb_{2,2} \le 1 - \sum_{j=2}^s \bb_{j,2} \label{2ineqa}, \\
1 - \sum_{j=2}^s \bb_{j,2} \le \frac{1}{s-1}. \label{2ineqb}
\end{align}
\end{subequations}
On the other hand, the second-order accuracy condition \eqref{oc2} evaluated at $\theta=1$ now reads
\begin{align} \label{2cond}
	\sum_{j=2}^s \frac{j-1}{s-1} \bb_{j,2} = \frac{1}{2}.
\end{align}
One way to satisfy \eqref{2cond} and \eqref{2ineqa} is to take 
\begin{align}
\bb_{j,2} & = \frac{1}{s}\quad (2\le j\le s).
\end{align}
It turns out that this solution is unique.
\begin{lemma}\label{uniquenesslemma}
Suppose that the real numbers $\bb_{2,2}, \dots, \bb_{s,2}$ satisfy \eqref{2cond}
and \eqref{2ineqa}.  Then $\bb_{j,2} = 1/s$ for $2\le j\le s$.
\end{lemma}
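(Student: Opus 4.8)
The plan is to squeeze the aggregate $\sigma := \sum_{j=2}^{s}\bb_{j,2}$ between two bounds that turn out to coincide, and then read off the individual coefficients from the resulting equality case. The two structural facts I would lean on are that \eqref{2ineqa} makes $\bb_{2,2}$ the largest coefficient, and that in \eqref{2cond} the multipliers $(j-1)/(s-1)$ increase with $j$ while the $\bb_{j,2}$ are nonincreasing in $j$; that is, the two sequences are oppositely sorted.

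For the upper bound, summing the $s-1$ inequalities $\bb_{j,2}\le \bb_{2,2}$ from \eqref{2ineqa} gives $\sigma\le (s-1)\bb_{2,2}$, hence $\bb_{2,2}\ge \sigma/(s-1)$; on the other hand the rightmost inequality of \eqref{2ineqa} is exactly $\bb_{2,2}\le 1-\sigma$. Chaining these and solving $\sigma/(s-1)\le 1-\sigma$ produces $\sigma\le (s-1)/s$. For the matching lower bound I would invoke the oppositely-sorted structure through Chebyshev's sum inequality,
\begin{equation*}
\frac{1}{s-1}\sum_{j=2}^{s}(j-1)\,\bb_{j,2}\ \le\ \Big(\frac{1}{s-1}\sum_{j=2}^{s}(j-1)\Big)\Big(\frac{1}{s-1}\sum_{j=2}^{s}\bb_{j,2}\Big).
\end{equation*}
Here \eqref{2cond} (multiplied by $s-1$) gives $\sum_{j=2}^s (j-1)\bb_{j,2}=(s-1)/2$, while $\sum_{j=2}^s (j-1)=s(s-1)/2$; substituting, the displayed inequality rearranges to $\sigma\ge (s-1)/s$.

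Combining the two bounds forces $\sigma=(s-1)/s$, so equality must hold in Chebyshev's inequality. Because the multiplier sequence $(j-1)_{j=2}^{s}$ is strictly increasing (in particular nonconstant), the equality case forces the other sequence to be constant, $\bb_{j,2}=c$ for all $2\le j\le s$; then $\sigma=(s-1)c=(s-1)/s$ yields $c=1/s$, as claimed. I expect the main obstacle to be the lower bound: the upper bound is an elementary consequence of the ordering in \eqref{2ineqa}, but the lower bound is precisely where the accuracy constraint \eqref{2cond} must enter, and the cleanest route is the oppositely-sorted (Chebyshev) estimate together with its equality characterization. If one prefers to avoid citing Chebyshev, the same conclusion follows from the identity $\tfrac12\sum_{i,j}(a_i-a_j)(b_i-b_j)=(s-1)\sum_k a_k b_k-(\sum_i a_i)(\sum_j b_j)$ with $a_j=j-1$ and $b_j=\bb_{j,2}$, since each summand $(a_i-a_j)(b_i-b_j)\le 0$ and vanishes only when $b_i=b_j$.
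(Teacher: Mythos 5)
Your proof is correct, but it takes a genuinely different route from the paper's. The paper substitutes $\delta_j:=\bb_{j,2}-1/s$ and argues by contradiction: a nonzero solution of $\sum_{j=2}^s(j-1)\delta_j=0$ respecting the ordering chain would force $\delta_2>0$, hence $\sum_{j=2}^s\delta_j<0$, hence (recursively) every tail sum $\sum_{j\ge i}\delta_j<0$, and then the Abel-type rearrangement $\sum_{j=2}^s(j-1)\delta_j=\sum_{i=2}^s\sum_{j=i}^s\delta_j<0$ contradicts \eqref{2cond}. You instead squeeze the aggregate $\sigma=\sum_{j=2}^s\bb_{j,2}$: the upper bound $\sigma\le(s-1)/s$ uses only the two extreme links of \eqref{2ineqa} (the maximality of $\bb_{2,2}$ and $\bb_{2,2}\le 1-\sigma$), while the lower bound $\sigma\ge(s-1)/s$ comes from Chebyshev's sum inequality for the oppositely sorted sequences $(j-1)$ and $(\bb_{j,2})$ together with \eqref{2cond}; the equality case of Chebyshev (with the strictly increasing multipliers) then forces the $\bb_{j,2}$ to be constant. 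All steps check out, including the arithmetic $\sum_{j=2}^s(j-1)\bb_{j,2}=(s-1)/2$ and $\sum_{j=2}^s(j-1)=s(s-1)/2$, and your closing double-sum identity is the correct elementary substitute for citing Chebyshev. The two arguments ultimately exploit the same monotonicity structure — your identity $\tfrac12\sum_{i,j}(a_i-a_j)(b_i-b_j)$ and the paper's tail-sum rearrangement are two faces of the same summation-by-parts fact — but your version additionally pins down the exact value of $\sigma$ and makes transparent that the order condition \eqref{2cond} is what supplies the matching lower bound, whereas the paper's version is a more compact self-contained contradiction that never needs to name a classical inequality.
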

\begin{proof}
Set $ \delta_j:=\bb_{j,2} - \frac{1}{s} $. Then from \eqref{2cond} we get
\begin{align} \label{epseq}
	\sum_{j=2}^s (j-1) \delta_j = 0,
\end{align}
and \eqref{2ineqa} becomes
\begin{align} \label{epsineq}
	-1/s \le \delta_s \le \ldots \le \delta_{2} \le - \sum_{j=2}^s \delta_j.
\end{align}
We will show that \eqref{epseq} and \eqref{epsineq} imply that $\delta_j=0$ for each $j$.
Suppose by way of contradiction that $\delta_j\ne 0$ for some $j$.  Then \eqref{epseq}
implies that $\delta_j>0$ for some $j$, so by \eqref{epsineq} we have $\delta_2>0$.
Hence \eqref{epsineq} implies that 
$\sum_{j=2}^s \delta_j < 0$.
This in turn implies recursively that each of the partial sums $\sum_{j=i}^s \delta_j$
(for $i\ge 2$) is negative.  But then
\[
	\sum_{j=2}^s (j-1) \delta_j  = \sum_{i=2}^s \sum_{j=i}^s \delta_j < 0,
\]
which contradicts \eqref{epseq}.
\end{proof}

We can now give a complete characterization of quadratic 2nd-order SSP dense output
formulae for optimal 2nd-order SSP Runge--Kutta methods, satisfying $\sspcoeff(A,\bb)\ge \sspcoeff(A,b)$.

\begin{theorem} \label{2ndorderthm}
	Consider the optimal 2nd-order SSP RK method with $s$ stages,
    having SSP coefficient $\sspcoeff(A,b)=s-1$.
    For $2 \le s \le 4$,
    the method possesses a unique 2nd-order accurate SSP 
    dense output formula with quadratic polynomials $\bb_j$ and SSP coefficient $\sspcoeff(A,\bb)=s-1$:
\begin{subequations}\label{thm5formulae}
    \begin{align}
    	\bb_1(\theta) & := \theta - \frac{s-1}{s} \theta^2 \\
        \bb_j(\theta) & := \frac{1}{s} \theta^2 & (2 \le j \le s).
    \end{align}
\end{subequations}
    For $s\ge 5$, there exists no 2nd-order SSP dense output formula
    quadratic in $\theta$ with $\sspcoeff(A,\bb)\ge s-1$.
\end{theorem}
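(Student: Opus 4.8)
The plan is to handle both regimes $2\le s\le 4$ and $s\ge 5$ at once, exploiting the fact that the preparatory analysis leading up to \eqref{orig2ineq} already pins the candidate down to a single possibility, so that the whole question reduces to one scalar inequality. First I would record that for this family a necessary condition for $\sspcoeff(A,\bb)\ge s-1$ is precisely that \eqref{orig2ineq} hold for all $\theta\in[0,1]$. Evaluating \eqref{orig2ineq} at the single point $\theta=1$ yields \eqref{2ineqa}, and second-order accuracy supplies \eqref{2cond}; Lemma \ref{uniquenesslemma} then forces $\bb_{j,2}=1/s$ for $2\le j\le s$. Substituting into \eqref{label18ab}, the only possible candidate is the formula \eqref{thm5formulae}, and this holds for every $s\ge 2$.

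Next I would note that \eqref{thm5formulae} satisfies the right-endpoint continuity \eqref{rightendpointcont} (indeed $\bb_j(1)=1/s=b_j$ for all $j$), so by \eqref{ineqbetweenCs} we automatically have $\sspcoeff(A,\bb)\le s-1$. It therefore remains only to decide for which $s$ this candidate attains $\sspcoeff(A,\bb)\ge s-1$, equivalently for which $s$ it satisfies the full chain \eqref{orig2ineq} on all of $[0,1]$, not merely at $\theta=1$. The ordering and non-negativity part is immediate: since $\bb_2=\dots=\bb_s=\frac{1}{s}\theta^2\ge 0$, the only non-trivial link is $\bb_2(\theta)\le\bb_1(\theta)$, which reduces to $\theta^2\le\theta$ and holds on $[0,1]$ for all $s$. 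The decisive constraint is thus the upper bound $\bb_1(\theta)\le 1/(s-1)$.

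The crux is the maximization of $\bb_1(\theta)=\theta-\frac{s-1}{s}\theta^2$ over $[0,1]$. I would emphasize that the endpoint value already obeys the bound for every $s$, since $\bb_1(1)=1/s\le 1/(s-1)$, so any obstruction for large $s$ can appear only in the interior; this is exactly why the full-interval condition must be examined. The vertex sits at $\theta^\ast=\frac{s}{2(s-1)}\in(0,1]$ for all $s\ge 2$, with maximal value $\bb_1(\theta^\ast)=\frac{s}{4(s-1)}$, and $\frac{s}{4(s-1)}\le\frac{1}{s-1}$ is equivalent to $s\le 4$. Hence for $2\le s\le 4$ the unique candidate \eqref{thm5formulae} satisfies \eqref{orig2ineq} throughout $[0,1]$, giving an SSP dense output with $\sspcoeff(A,\bb)=s-1$ and establishing both existence and uniqueness; for $s\ge 5$ it violates the bound at $\theta^\ast$, so the only admissible formula fails and no quadratic second-order SSP dense output with $\sspcoeff(A,\bb)\ge s-1$ exists.

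I do not expect a deep obstacle, since the structural work has been discharged by Theorem \ref{thm2} and Lemma \ref{uniquenesslemma}, which collapse the problem to the single candidate. The one point requiring care is the logic of the $s\ge 5$ case: because the necessary conditions force \eqref{thm5formulae} and that formula fails \eqref{orig2ineq}, there is genuinely no admissible formula, not merely one failing guess; and, as noted, the separation between $s\le 4$ and $s\ge 5$ is invisible at $\theta=1$ and surfaces only from the interior maximum.
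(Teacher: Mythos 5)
Your argument is correct and follows the paper's structure for the reduction step and for the $s\ge 5$ case: both pin the candidate down to \eqref{thm5formulae} via \eqref{label18ab} and Lemma \ref{uniquenesslemma}, and both obtain the obstruction for $s\ge 5$ from the interior maximum of $\bb_1$ at $\theta^\ast=\frac{s}{2(s-1)}$. Where you diverge is the verification for $2\le s\le 4$: the paper observes that \eqref{thm5formulae} coincides with \eqref{2ndorder_coeffs} and invokes Theorem \ref{DO2thm} (Statement 3 for $s\in\{2,3\}$, equality in \eqref{xineq} for $s=4$), whereas you verify the chain \eqref{orig2ineq} directly, reducing everything to the single inequality $\max_{[0,1]}\bb_1=\frac{s}{4(s-1)}\le\frac{1}{s-1}$, i.e.\ $s\le 4$. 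Your route is more self-contained and treats all $s$ uniformly through one vertex computation (which is, in effect, the same computation hidden inside the $\varphi(\theta^\ast)$ case of Theorem \ref{DO2thm} with $\gamma=b^\top M\ee=1/s$); the paper's route buys reuse of an already-proved general theorem. One small point to make explicit: when you pass from ``\eqref{orig2ineq} holds on $[0,1]$'' to ``$\sspcoeff(A,\bb)\ge s-1$,'' you are using that absolute monotonicity at $r=s-1$ (together with \eqref{Aineq}, which holds since $\sspcoeff(A,b)=s-1$) implies the conditions hold for all $r\in[0,s-1]$; the paper relies on the same standard fact inside the proof of Theorem \ref{DO2thm}, so this is not a gap, but it deserves a sentence.
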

\begin{proof} Formulae \eqref{label18ab} and 
Lemma \ref{uniquenesslemma} show that any candidate dense output formula must have 
the form given by \eqref{thm5formulae}. But now $b_j=1/s$ ($1\le j\le s$), hence formulae
\eqref{thm5formulae} are identical to formulae \eqref{2ndorder_coeffs}, and so Theorem \ref{DO2thm}
is applicable.

For $s\in\{2,3\}$, we have $\sspcoeff(A,b)=s-1\le 2$, so Statement 3 of Theorem \ref{DO2thm}
yields $\sspcoeff(A,\bb)= s-1$.

For $s=4$, we have equality in \eqref{xineq} because both sides are equal to $1/4$, so we again get $\sspcoeff(A,\bb)= s-1$.

For $s\ge 5$ however, the inequality $\bb_1(\theta) \le \frac{1}{\sspcoeff(A,b)}$ in 
the necessary condition \eqref{orig2ineq}
 is violated for 
$\theta:=\frac{s}{2(s-1)}\in (0,1]$, so there is no corresponding 2nd-order SSP dense output formula
    with $D=2$ and $\sspcoeff(A,\bb)\ge \sspcoeff(A,b)$.
\end{proof}

\begin{remark}
For optimal 2nd-order explicit SSP RK methods $(A,b)$ with $s\ge 5$ stages,
Theorem \ref{2ndorderthm} does not preclude the existence of 2nd-order dense output
formulae that consist of $\bb_j$ polynomials of 
 degree $D\ge 3$ in \eqref{bthetarepresentation} and have $\sspcoeff(A,\bb)\ge \sspcoeff(A,b)>0$.  But initial searches have failed to yield any such formulae. 
\end{remark}

\begin{remark}
The uniqueness in Theorem \ref{2ndorderthm} ``almost'' follows from the uniqueness stated in the sentence preceding formula \eqref{2ndorder_coeffs}.
However,  when deriving \eqref{2ndorder_coeffs}, the right endpoint condition \eqref{rightendpointcont}
has been taken into account---whereas when deriving Theorem \ref{2ndorderthm}, one does not use \eqref{rightendpointcont}.
\end{remark}

\section{Conclusion}
We have proved the non-existence of SSP dense output of order three or higher;
we have shown that this follows from a more fundamental result on the abscissas
of continuously-valued quadrature rules with positive weights.  On the other
hand, for some of the most common SSP methods it is straightforward to
construct dense output of second order using \eqref{2ndorder_coeffs}.

For methods that do not satisfy condition 2 or 3 of Theorem \ref{DO2thm}, it
may still be possible to develop 2nd-order dense output with 
$\sspcoeff(A,\bb) \ge \sspcoeff(A,b)$.

Theorem \ref{thmquadrule} naturally suggests the inclusion
of negative abscissas in the dense output formula.  This can
be done most simply by using previous step values $u_{n-j}$,
which is the subject of current research.


\vspace{1in}
{\bf Acknowledgment.}  We thank an anonymous referee for several suggestions
that improved the presentation of this work.

\appendix

\section{Some complete methods in Shu--Osher form\label{sec:appendix}}
We have worked with methods in Butcher form because of the simplicity
of the order conditions in that form.
In this appendix we write out some of the methods with dense output in
Shu--Osher form, since this is usually the most convenient form for
implementation.  For the sake of brevity we write the methods in
autonomous form and let $\sspcoeff=\sspcoeff(A,b,\bb)$.

The dense output in Shu--Osher form is
\begin{align*}
    u_{n+\theta} & = \mu u_n + \sum_{j=1}^s \betab_j(\theta) \left(y_j + \frac{h}{\sspcoeff}f(y_j)\right)
\end{align*}
where
\begin{align*}
    \betab^\top & := \bb^\top\left(I+\sspcoeff A\right)^{-1} \\
    \mu & := 1 - \sspcoeff \bb^\top \left(I + \sspcoeff A\right)^{-1}.
\end{align*}

We denote each method by SSP$(s,p,\overline{p})$, where $s$ is the number of
stages, $p$ is the order of the method $(A, b)$, and $\overline{p}$ is the order of the dense output.

\subsection{SSP(2,2,2)}
This method has $\sspcoeff(A,b,\bb) = 1$.
\begin{align*}
    y_1 & = u_n \\
    y_2 & = y_1 + h f(y_1) \\
    u_{n+1} & = \frac{1}{2} u_n + \frac{1}{2} \left(y_2 + hf(y_2)\right) \\
    u_{n+\theta} & = \left(1 - \theta + \frac{1}{2}\theta^2\right)u_n + 
        (\theta-\theta^2)\left(y_1+hf(y_1)\right) +
        \frac{1}{2}\theta^2\left(y_2 + hf(y_2)\right).
\end{align*}

\subsection{SSP(3,2,2)}
This method has $\sspcoeff(A,b,\bb) = 2$.
\begin{align*}
    y_1 & = u_n \\
    y_2 & = y_1 + \frac{h}{2} f(y_1) \\
    y_3 & = y_2 + \frac{h}{2} f(y_2) \\
    u_{n+1} & = \frac{1}{3} u_n + \frac{2}{3} \left(y_3 + \frac{h}{2} f(y_3)\right) \\
    u_{n+\theta} & = \left(1-2\theta + \frac{4}{3}\theta^2\right)u_n + 
        2(\theta-\theta^2)\left(y_1+\frac{h}{2}f(y_1)\right) +
        \frac{2}{3}\theta^2\left(y_3 + \frac{h}{2}f(y_3)\right).
\end{align*}

\subsection{SSP(3,3,2)}
This method has $\sspcoeff(A,b,\bb) = 1$.
\begin{align*}
    y_1 & = u_n \\
    y_2 & = y_1 + h f(y_1) \\
    y_3 & = \frac{3}{4} u_n + \frac{1}{4} \left(y_2 + h f(y_2)\right) \\
    u_{n+1} & = \frac{1}{3} u_n + \frac{2}{3} \left(y_3 + h f(y_3)\right) \\
    u_{n+\theta} & = \left(1 - \theta + \frac{1}{3}\theta^2\right)u_n + 
        (\theta-\theta^2)\left(y_1+hf(y_1)\right) +
        \frac{2}{3}\theta^2\left(y_3 + hf(y_3)\right).
\end{align*}

\bibliography{bib}
\end{document}